\documentclass[a4paper,12pt,reqno]{amsart}
\usepackage[utf8]{inputenc}
\usepackage[english]{babel}
\usepackage{amsmath, amsthm, amssymb, amsopn, amsfonts, amstext, enumerate, color, mathtools, mathrsfs, hyperref, enumitem, url, setspace}
\usepackage{cleveref, autonum}
\usepackage[initials,nobysame]{amsrefs}

\definecolor{OIblue}{RGB}{0,114,178}
\definecolor{OIorange}{RGB}{230,159,0}
\definecolor{OIgreen}{RGB}{0,158,115}
\definecolor{OIvermillion}{RGB}{213,94,0}
\definecolor{OIpurple}{RGB}{204,121,167}
\definecolor{OIyellow}{RGB}{240,228,66}
\definecolor{OIblack}{RGB}{0,0,0}

\hypersetup{
	colorlinks=true,
	linkcolor=OIblue,
	citecolor=OIgreen,
	urlcolor=black,
	linktoc=all
}

\usepackage[margin=0.9in]{geometry}
\numberwithin{equation}{section}

\usepackage{cleveref}

\newcommand{\C}{\mathcal{C}}

\newcommand{\K}{\mathcal{K}}

\newcommand{\N}{\mathbb{N}}

\newcommand{\R}{\mathbb{R}}

\newcommand{\Z}{\mathbb{Z}}

\newcommand{\loc}{{\rm loc}}

\newcommand{\dist}{{\mbox{\normalfont dist}}}

\newcommand{\Haus}{\mathcal{H}}

\newcommand{\norm}[1]{\left \| {#1} \right \| }
\newcommand{\seminorm}[1]{\left [ {#1} \right ] }

\DeclareMathOperator{\Tail}{Tail}

\DeclareMathOperator{\DG}{DG}
\DeclareMathOperator{\wDG}{wDG}

\def\XXint#1#2#3{{\setbox0=\hbox{$#1{#2#3}{\int}$ }
\vcenter{\hbox{$#2#3$ }}\kern-.6\wd0}}

\theoremstyle{plain}
\newtheorem{definition}{Definition}[section]
\newtheorem{theorem}[definition]{Theorem}

\newtheorem{lemma}[definition]{Lemma}

\theoremstyle{definition}
\newtheorem{remark}[definition]{Remark}

\renewcommand{\le}{\leqslant}
\renewcommand{\leq}{\leqslant}
\renewcommand{\ge}{\geqslant}
\renewcommand{\geq}{\geqslant}

\title[A fractional De Giorgi isoperimetric type inequality]{A fractional De Giorgi isoperimetric type inequality}

\author{Matteo Cozzi}
\address{M. Cozzi:
	Dipartimento di Matematica ``Federigo Enriques'',
	Universit\`a degli Studi di Milano,
	Via Saldini 50,
	20133 Milano,
	Italy}
\email{matteo.cozzi@unimi.it}

\author{Tomás Sanz-Perela}
\address{T. Sanz-Perela:
	Departament de Matem\`atiques i Inform\`atica,
	Universitat de Barcelona,
	Gran Via de les Corts Catalanes 585, 08007, Barcelona, Spain \quad and
	\newline
	Centre de Recerca Matem\`atica, Edifici C, Campus Bellaterra, 08193 Bellaterra, Spain}
\email{tomas.sanz.perela@ub.edu}

\keywords{De Giorgi isoperimetric type inequality, fractional De Giorgi classes, nonlocal Caccioppoli inequality, H\"older continuity}

\subjclass[2020]{35A23, 49Q20, 28A75, 46E35}

\begin{document}
	
\begin{abstract}
	We establish an isoperimetric type inequality for the level sets of functions in fractional Sobolev spaces. This answers a question posed by the first author in a previous paper. To obtain it, we work out a slight modification of some estimates for nonlocal interaction functionals established by Savin and Valdinoci. We also show how said isoperimetric inequality leads to the H\"older continuity of functions in (weak) fractional De Giorgi classes.
\end{abstract}

\maketitle

%\tableofcontents

\vspace{0.7cm}

\section{Introduction}	

\noindent
The relative isoperimetric inequality asserts that the perimeter of a set~$E$ in a fixed bounded domain~$\Omega$ controls the product between the volume of~$E$ and that of its complement~$\Omega \setminus E$. Restricting ourselves for simplicity to the case in which~$\Omega$ is a ball, this can be expressed more precisely via the inequality
\begin{equation}
\Big[ {|B_1 \cap E| |B_1 \setminus E|} \Big]^{\frac{n - 1}{n}} \le C_n P(E; B_1),
\end{equation}
where~$|\cdot|$ denotes the Lebesgue measure in~$\R^n$,~$P(\,\cdot\,; B_1)$ is the relative perimeter with respect to the unit ball~$B_1$, and~$C_n$ is a positive dimensional constant. See~\cite{G84}*{Chapter~1} or~\cite{M12}*{Chapter~12} for proofs of a slightly stronger version of this inequality.

A fundamental tool in De Giorgi's proof of the continuity of weak solutions to elliptic equations with rough coefficients is a functional form of the above isoperimetric inequality applied to level sets of Sobolev functions. This result, established in~\cite{DG57}, provides a quantitative expression of the fact that functions in~$W^{1,p}$ with~$p>1$ cannot exhibit jump discontinuities. The inequality can be stated as
\begin{equation}
	\label{Eq:LocalIsop}
	\Big[ |B_1 \cap \{ u \le h \}| | B_1 \cap \{ u \ge k \} | \Big]^{\frac{n - 1}{n}} \le \frac{C}{k - h} \, \| \nabla u \|_{L^p(B_1)} | B_1 \cap \{ h < u < k \} |^{\frac{p - 1}{p}}
\end{equation}
and holds for any two real numbers~$h < k$, any~$u \in W^{1, p}(B_1)$ with~$p \in (1, +\infty)$, and for some constant~$C > 0$ depending only on~$n$ and~$p$. In De Giorgi's regularity theory, this inequality (or some variation of it) is one of the tools used to obtain an oscillation decay estimate that eventually leads to the H\"older regularity of solutions---see, for instance,~\cites{V16,FernRealRosOton-PDEbook,I26} for more detailed discussions.

\medskip

In the early 2000s, the study of integrodifferential equations gained a lot of attention within the PDE community---even though nonlocal operators were already studied in probability or harmonic analysis. This development was driven both by applications---such as anomalous diffusion, phase transitions, finance, and image processing---and by the realization that many classical techniques could be adapted, at least in part, to the nonlocal setting.

The fractional Laplacian~$(-\Delta)^s$, for~$s \in (0, 1)$, is the paradigmatic example of nonlocal operator affected by this growing interest. In the early stages of the theory, a decisive role was played by the so-called \emph{extension problem}, a method that allows one to write a nonlocal equation as a local one over a space with one additional dimension and which thus grants access to powerful tools from the local PDE theory---see~\cite{CS07}.
Soon, however, the focus expanded to more general classes of nonlocal operators that do not admit local extension problems. This shift led to the development of genuinely nonlocal techniques, designed to handle directly the long-range interactions encoded in the integrodifferential structure of the equations.
For more details we refer the reader to,~e.g.,~\cites{FracPDEsBook, RecentDevelopmentsNonlocal}.

In this setting, a natural question was whether the regularity theory of De Giorgi could be adapted to the fractional framework.
This was first addressed in~\cite{CaffarelliVasseur} with the help of the local extension problem and in~\cite{CCV11} through the development of intrinsically nonlocal techniques. Afterwards, several other works expanded on these ideas, including~\cite{Cozzi-DeGiorgi} by the first author, where a notion of~\emph{fractional De Giorgi class} was first explicitly introduced and systematically studied. Without entering now into much detail (see the forthcoming Subsection~\ref{Subsec:applintro}), these are subsets of the fractional Sobolev space~$W^{s,p}$ made up of functions that admit a suitable Caccioppoli type inequality---a feature enjoyed by solutions of nonlocal problems, which is understood in~\cites{CCV11,Cozzi-DeGiorgi} as the starting point to establish their regularity. To carry forward this objective, one needs a replacement for the local inequality~\eqref{Eq:LocalIsop}. In~\cites{CCV11,Cozzi-DeGiorgi}, this came in the form of an especially strong version of the fractional Caccioppoli inequality, which is satisfied by solutions of nonlocal PDEs and contains a genuinely nonlocal extra term on its left-hand side, not present in its local counterparts. Notwithstanding, in order to obtain regularity estimates stable with respect to the parameter~$s$, in~\cite{Cozzi-DeGiorgi} a partial analogue of~\eqref{Eq:LocalIsop} was obtained for functions lying in the fractional Sobolev space~$W^{s, p}$ for~$s$ close to~$1$. Therefore, the following natural question (later made explicit in~\cite{C19}) remained open:
\vspace{2pt}
$$
\mbox{Does a version of~\eqref{Eq:LocalIsop} hold true in the space~$W^{s, p}$ for all~$p \in (1, +\infty)$ and~$s \in \left[ \frac{1}{p}, 1 \right)$?}
\vspace{2pt}
$$
Notice that this is not possible for~$s \in \left( 0, 1/p \right)$, since, in this regime, characteristic functions of smooth sets belong to~$W^{s, p}$---see the forthcoming Remark~\ref{Remark:Characteristic}.

In this paper, we give an affirmative answer to the above question.

\subsection{A fractional isoperimetric De Giorgi inequality}

In order to state our main result, we need to fix some notation.
As is customary, given~$s \in (0,1)$,~$p \in [1, +\infty)$, and an open set~$\Omega \subset \R^n$, we define the fractional Sobolev space~$W^{s,p}(\Omega)$ as
\begin{equation}
	W^{s,p}(\Omega)
	\coloneqq
	\Big\{
	{u \in L^p(\Omega)
	\; : \;
	[u]_{W^{s,p}(\Omega)} < +\infty}
	\Big\}.
\end{equation}
where~$[u]_{W^{s,p}(\Omega)}$ is the so-called Gagliardo seminorm, given by
\begin{equation} \label{Wspsemi}
	[u]_{W^{s,p}(\Omega)}
	\coloneqq
	\left( \int_{\Omega}
	\int_{\Omega}
	\frac{|u(x)-u(y)|^p}{|x-y|^{n+sp}}
	\, dx dy \right)^{\frac{1}{p}}.
\end{equation}
This seminorm, when multiplied by an apropriate normalizing constant depending on~$n$,~$s$, and~$p$, converges as~$s\uparrow1$ to the classical integer-order Sobolev seminorm~$\|\nabla u\|_{L^p(\Omega)}$. 
For a gentle introduction to this topic we refer the interested reader to~\cite{HGuide}. Finally, we introduce the following auxiliary function, that we shall use to quantify the dependence of our isoperimetric inequality on the Lebesgue measure of the \emph{interface}~$B_1 \cap \{ h < u < k\}$. For a given parameter~$\alpha \ge 1$, we consider the strictly increasing function~$\Psi_\alpha: [0, 1] \to [0, 1]$ defined for~$t \in (0, 1]$ by
\begin{equation} \label{Eq:Psialphadef}
	\Psi_\alpha(t) \coloneqq \begin{dcases}
	t^{\alpha - 1} & \quad \mbox{if } \alpha > 1, \\
	\frac{1}{\left| \log \! \left(\frac{t}{e} \right) \right|} & \quad \mbox{if } \alpha = 1,
	\end{dcases}
\end{equation}
and continuously extended to the whole~$[0, 1]$ by setting~$\Psi_\alpha(0) \coloneqq 0$.

Having this defined, we can now state our fractional isoperimetric inequality.

\begin{theorem}\label{Th:DeGiorgiIsop}
	Let~$n \in \N$,~$s \in (0, 1)$, and~$p \in (1, +\infty)$ be such that~$sp \geq 1$. Then, there exist two constants~$C \ge 1$ and~$\beta > 0$, depending only on~$n$,~$s$, and~$p$, such that
	\begin{equation}
		\Big[ {|B_1 \cap \{ u \le h \}||B_1 \cap \{ u \ge k \}|} \Big]^{\! \beta} \le \frac{C}{k - h} \, [u]_{W^{s, p}(B_1)} \Psi_{sp} \! \left( \frac{|B_1 \cap \{ h < u < k \}|}{|B_1|} \right)^{\!\frac{1}{p}},
	\end{equation}
	for every~$u \in W^{s, p}(B_1)$ and every two real numbers~$h < k$.
\end{theorem}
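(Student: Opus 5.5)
The plan is to reduce the statement to a lower bound for a nonlocal interaction between two disjoint sets, and then prove that bound by a multiscale covering argument in the spirit of Savin and Valdinoci.

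\emph{Reduction.} Given~$u$ and~$h<k$, set~$w \coloneqq \big(\min\{\max\{u,h\},k\}-h\big)/(k-h)$. Then~$w$ takes values in~$[0,1]$, and since truncation is~$1$-Lipschitz we have~$[w]_{W^{s,p}(B_1)}\le [u]_{W^{s,p}(B_1)}/(k-h)$. Write~$A\coloneqq B_1\cap\{u\le h\}=\{w=0\}$,~$B\coloneqq B_1\cap\{u\ge k\}=\{w=1\}$ and~$D\coloneqq B_1\cap\{h<u<k\}$. Since~$|w(x)-w(y)|=1$ whenever~$x\in A$ and~$y\in B$,
\begin{equation}
	\frac{[u]_{W^{s,p}(B_1)}^p}{(k-h)^p}\ \ge\ 2\,L(A,B), \qquad L(A,B)\coloneqq\int_A\int_B\frac{dx\,dy}{|x-y|^{n+sp}}.
\end{equation}
It therefore suffices to prove the interaction estimate
\begin{equation}
	L(A,B)\ \ge\ c\,\frac{(|A||B|)^{\gamma}}{\Psi_{sp}(|D|/|B_1|)}
\end{equation}
for some~$c,\gamma>0$ depending only on~$n$,~$s$,~$p$. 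Taking~$p$-th roots then gives the theorem with~$\beta=\gamma/p$. Here~$A$,~$B$,~$D$ are disjoint and~$A\cup B\cup D=B_1$.

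\emph{Proof of the interaction estimate.} Let~$\delta\coloneqq\min\{|A|,|B|\}$, so that~$\delta^2\lesssim |A||B|\lesssim\delta$. If~$|D|\ge c_0\delta$, then~$\Psi_{sp}$ is bounded below by a power of~$\delta$. In that case it is enough to use the crude bound~$L(A,B)\ge 2^{-n-sp}|A||B|$, which holds because~$|x-y|\le 2$ on~$B_1$; this is absorbed by choosing~$\gamma$ large.

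Assume now that~$|D|\ll\delta$. For each dyadic scale~$r=2^{-j}$, tile (a cube inside, or a suitable decomposition of)~$B_1$ by cubes of side~$r$. Call a cube \emph{black} if at least~$2/3$ of its measure lies in~$A$, \emph{white} if at least~$2/3$ lies in~$B$, and \emph{gray} otherwise. A gray cube either has a definite fraction of~$D$, or it splits~$A$ and~$B$ comparably; the latter case is equally good for us.

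Suppose~$|D|\le \varepsilon\,\delta\, r$. Then the number of~$D$-heavy cubes is at most~$|D|/(\varepsilon r^n)$. A discrete relative isoperimetric inequality on the grid gives at least~$c\,\delta^{(n-1)/n}r^{1-n}$ adjacent pairs of cubes that are either a black–white pair or a single non-$D$-heavy gray cube. For~$\varepsilon$ small, the~$D$-heavy cubes cannot destroy all these interfaces.

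Each such pair or cube contributes~$\gtrsim r^{2n}\,r^{-n-sp}=r^{n-sp}$ to the part of~$L(A,B)$ where~$|x-y|\in[r,2\sqrt n\,r]$. Hence that annular part of~$L$ is at least~$c\,\delta^{(n-1)/n}r^{1-sp}$. The annular ranges~$|x-y|\sim r$ overlap only boundedly across dyadic~$r$, so we may sum over all scales~$r\in[\,|D|/(\varepsilon\delta),\,1]$:
\begin{equation}
	L(A,B)\ \gtrsim\ \delta^{\frac{n-1}{n}}\sum_{r}r^{1-sp}\ \gtrsim\ \delta^{\frac{n-1}{n}}\cdot\begin{cases}\big(|D|/\delta\big)^{1-sp}, & sp>1,\\[2pt] \log\!\big(\delta/|D|\big), & sp=1.\end{cases}
\end{equation}
In both cases the right-hand side is at least~$c\,\delta^{\gamma'}/\Psi_{sp}(|D|/|B_1|)$ for a suitable~$\gamma'$; for~$sp=1$ one uses~$\log(\delta/|D|)\gtrsim \delta\,|\log(|D|/e)|$ once~$|D|\le\delta^2$, say. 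Combined with~$\delta^2\lesssim|A||B|$, this yields the claim.

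\emph{Main obstacle.} I expect the hardest step to be the scale-by-scale counting. One has to show rigorously that, at every admissible scale~$r$, a number of boundary pairs comparable to the discrete isoperimetric count~$\delta^{(n-1)/n}r^{1-n}$ survives after discarding~$D$-heavy cubes. One must also control the contributions across scales so that they do not double count. This is where I would adapt the Savin–Valdinoci estimates for interaction functionals. My first attempt would be a discrete isoperimetric inequality on the grid of the ball, where the~$D$-heavy cubes are treated as a removable set of cardinality~$\le|D|/(\varepsilon r^n)\ll\delta r^{1-n}$.
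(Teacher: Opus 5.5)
Your architecture is the paper's: reduce to the interaction $L(A,B)$, use the crude bound $L\ge c|A||B|$ when the interface is not small, sort grid cubes at each scale into $A$-, $B$- and $D$-type, and sum annular contributions over scales. Your key counting step is genuinely different, though.

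In Lemma~\ref{Lemma:SVest} the paper uses the Savin--Valdinoci projection lemma to find an axis along which $Q_A$ projects onto a set of measure at least $|Q_A|^{\frac{n-1}{n}}$. It then counts columns containing an $A$-cube and a $B$-cube but no $E$-cube. This route has three costs. Subtracting the columns made only of $A$-cubes loses a factor $\sigma$, so the count is $\sigma^{\frac{2n-1}{n}}r^{1-n}$. The smallness $|E|\le\gamma r$ is imposed with the same $\gamma\lesssim\sigma^{\frac{2n-1}{n}}$ that fixes the ratio $\gamma^2$ between consecutive scales, which costs $1/|\log\sigma|$ when $sp=1$. And $n=1$ needs a separate argument.

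In your version, let $U$ be the union of black cubes. Then $P(U;Q_1)$ is $r^{n-1}$ times the number of black/non-black faces, so the relative isoperimetric inequality in $Q_1$ gives at least $c\min\{\#\text{black},\#\text{non-black}\}^{\frac{n-1}{n}}$ such faces. Discarding the at most $2n$ faces of each $D$-heavy gray cube leaves black--white pairs or good gray cubes. This gives $\delta^{\frac{n-1}{n}}r^{1-sp}$ per scale, dyadic scales, and no case split in $n$. The price is invoking the relative isoperimetric inequality instead of the more elementary projection lemma. Either version suffices, since $\beta$ may be taken large.

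Three points of your sketch need repair.

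\textbf{Missing case.} Your count of $c\,\delta^{\frac{n-1}{n}}r^{1-n}$ good objects is false as stated. It requires both $\#\text{black}$ and $\#\text{non-black}$ to be $\gtrsim\delta r^{-n}$, which fails when $A$ lies inside white cubes. For instance, if $A$ and $B$ are interleaved in strips much thinner than $r$ with densities $\frac14$ and $\frac34$, every cube is white and there are no black or gray cubes at all.

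You need the analogue of the paper's Cases~1--2. If the $A$-mass in white or good gray cubes, or the $B$-mass in black or good gray cubes, is $\gtrsim\delta$, then intra-cube interactions already give $\gtrsim\delta r^{-sp}\ge\delta^{\frac{n-1}{n}}r^{1-sp}$ for $r\le\delta^{1/n}$. Otherwise black and white cubes each carry mass $\gtrsim\delta$ (the $D$-heavy cubes carry $\lesssim|D|$), and the isoperimetric count applies.

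Consequently the scales must be restricted to $r\lesssim\delta^{1/n}$ rather than running up to $1$. The crude regime should then be $|D|\ge\delta^K$ for a large $K$, not $|D|\ge c_0\delta$. This is harmless, since $\Psi_{sp}(\delta^K)$ is still bounded below by a power of $\delta$.

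\textbf{Annulus cutoff.} The lower cutoff of the annulus cannot be $r$, because inside a single gray cube all $A$--$B$ pairs may lie at distance less than $r$. Use $[c_n r,2\sqrt n\,r]$ with $c_n$ small, as the paper does with $\gamma r$; the overlap across dyadic scales stays bounded.

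\textbf{Ball versus cube.} Tiling a cube inside $B_1$ loses $A$ and $B$ near $\partial B_1$. Cutting grid cubes by the sphere instead produces small pieces whose interactions are not comparable to $r^{n-sp}$. Transfer to $Q_1$ by a bi-Lipschitz map as the paper does; for $sp=1$ this requires $\Psi_1(Mt)\le C\,\Psi_1(t)$ for small $t$.
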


When~$sp > 1$, the above inequality has a power like structure which is completely analogous to the classical~\eqref{Eq:LocalIsop}. As~$sp$ gets closer and closer to~$1$, such inequality however deteriorates, since the power involved in the definition of~$\Psi_\alpha$ for~$\alpha > 1$ approaches zero. Nevertheless, when~$s p = 1$ a weaker form of inequality still holds true, involving on its right-hand side a logarithmic dependence on the measure of the interface.

As illustrated by this and previous discussions,~$sp = 1$ is a limiting case. Indeed, when~$sp < 1$ this kind of inequalities cannot hold, as shown in the following remark.

\begin{remark}
	\label{Remark:Characteristic}
	As we have already mentioned, a characteristic function of a set can belong to~$W^{s,p}$ when~$sp<1$. To see this, consider the cylinders
	\begin{equation}
		E \coloneqq B_1' \times (-1, 1), \quad E^+ \coloneqq B_1' \times (0, 1), \quad \text{where } B_1' \coloneqq \Big\{ {x' \in \R^{n-1} \, : \, |x'|<1} \Big\},
	\end{equation}
	as well as the characteristic function~$u \coloneqq \chi_{E^+}$. Then, we have
	\begin{equation}
		[u]_{W^{s,p}(E)}^p = 2 \int_0^1 \int_{-1}^0 \Bigg( {\int_{B_1'} \int_{B_1'} \big( {|x'-y'|^2 + |x_n-y_n|^2} \big)^{-\frac{n+sp}{2}} dx' dy'} \Bigg) \, dx_n dy_n,
	\end{equation}
	integral which, through the changes of coordinates~$x' = |x_n - y_n| w'$,~$y' = |x_n - y_n|(w' + z')$ and~$x_n = t$,~$y_n = - \tau$, can be estimated as
	\begin{equation}
		[u]_{W^{s,p}(E)}^p \le C_{n, s, p} \int_0^1 \int_0^1 \frac{dt d\tau}{(t + \tau)^{1 + sp}}, \quad \mbox{with } C_{n, s, p} \coloneqq 2 |{B_1'}| \int_{\R^n} \big( {1 + |z'|^2} \big)^{- \frac{n+sp}{2}} dz'.
	\end{equation}
	Note that the double integral in the variables~$t$ and~$\tau$ is finite if and only if~$sp<1$.	Hence, $u\in W^{s,p}(E)$ for~$sp<1$. However, the interface~$\{0<u<1\}$ has Lebesgue measure zero, so it cannot control the size of sub/super-level sets of the function~$u$, as in the  isoperimetric type inequalities of Theorem~\ref{Th:DeGiorgiIsop}.
\end{remark}

The proof of Theorem~\ref{Th:DeGiorgiIsop} relies on an accurate estimate for a nonlocal interaction functional applied to the sub- and superlevel sets of the function~$u$. More precisely, after a simple rescaling it can be assumed that~$h=0$ and~$k = 1$. Then, recalling~\eqref{Wspsemi}, one observes that
\begin{equation} \label{Eq:seminormcontrolsinteraction}
	[u]_{W^{s, p}(B_1)}^p \ge \int_A \int_B \frac{dx dy}{|x - y|^{n + s p}} \eqqcolon I(A,B),
\end{equation}
where~$A \coloneqq B_1 \cap \{u\leq0\}$ and~$B \coloneqq B_1 \cap \{u\geq 1\}$. As the computations in Remark~\ref{Remark:Characteristic} suggest, when~$sp \ge 1$ the nonlocal interaction~$I(A, B)$ gets large as the measure of the interface $E \coloneqq B_1 \cap \{ 0<u<1\}$ becomes small. To complete the proof of the isoperimetric inequalities, we make this observation quantitative and obtain a lower bound for~$I(A, B)$ in terms of the sizes of~$A$,~$B$, and~$E$. We achieve this by revisiting the proof of~\cite{SV14}*{Proposition~4.3} by Savin and Valdinoci, keeping track of how the regulating constants of their inequality depend on some relevant quantities. The result of this analysis is the following reworked inequality.

\begin{lemma} \label{Lemma:SVest}
	Let~$n \in \N$ and~$\alpha \in [1, +\infty)$. Let~$A, B \subset Q_1 \coloneqq (0,1)^n$ be disjoint measurable sets satisfying
	\begin{equation} \label{ABaresigmafat}
		\min \big\{ {|A|, |B|} \big\} \ge \sigma,
	\end{equation}
	for some~$\sigma \in (0, 1)$. Set~$E \coloneqq Q_1 \setminus \left( A \cup B \right)$. Then, there exist two constants~$c_\sharp \in (0, 1)$ and~$q \ge 1$, depending only on~$n$ and~$\alpha$, such that, if~$|E| \in (0, c_\sharp \, \sigma^q]$, then
	\begin{equation} \label{Eq:interacest}
		\int_A \int_B \frac{dx dy}{|x - y|^{n + \alpha}} \ge c_\sharp \, \frac{\sigma^q}{\Psi_\alpha \big( {|E|} \big)}.
	\end{equation}
\end{lemma}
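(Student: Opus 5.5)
This follows the Savin--Valdinoci scheme (\cite{SV14}*{Proposition~4.3}): a dyadic decomposition of~$Q_1$ together with a fattening argument, keeping every constant explicit in~$\sigma$ and~$|E|$.

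\textbf{Step 1: a cube where~$A$ and~$B$ both concentrate.} For~$k \ge 0$ we consider the dyadic cubes of side~$2^{-k}$. By Lebesgue density, at fine scales most cubes are almost full of~$A$, of~$B$, or of~$E$. We look for a scale~$k$ and a cube~$Q$ of side~$2^{-k}$ (or a pair of adjacent cubes) that is "mixed":
\[
|Q\cap A| \ge \delta |Q| \quad\text{and}\quad |Q\cap B| \ge \delta |Q|
\]
for a small dimensional~$\delta$. Directly, this gives~$I(A,B) \ge \delta^2 |Q|^2 (\sqrt n\, 2^{-k})^{-n-\alpha} \approx 2^{k(\alpha-n)}$. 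That is still too weak, so we instead sum interactions over many scales.

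\textbf{Step 2: multiscale counting.} For each~$k$ we classify the dyadic cubes of generation~$k$ as~$A$-cubes, $B$-cubes, $E$-cubes, or mixed cubes. The~$E$-cubes are those with~$|Q\cap E|\ge \delta|Q|$, so there are at most~$|E|\,2^{kn}/\delta$ of them. At the coarse scale~$k=0$ the unit cube is mixed, because~$|A|,|B|\ge\sigma$. Passing from generation~$k$ to~$k+1$, a relative isoperimetric argument on the discrete graph of cubes shows the following. Suppose~$A$-type and~$B$-type children each occupy proportion at least~$\sim\sigma$. Then the number~$N_k$ of mixed or "interface" cubes of generation~$k$ (adjacent cubes of different type, or mixed cubes) is at least~$c\,\sigma^{q_0} 2^{k(n-1)}$. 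This is the discrete relative isoperimetric inequality, and~$E$-cubes can absorb the interface only while~$|E|2^{kn}\lesssim \sigma^{q_0}2^{k(n-1)}$, i.e.\ for~$2^{k}\lesssim \sigma^{q_0}/|E|$. Each interface cube (pair) contributes at least~$c\,2^{-2kn}2^{k(n+\alpha)}=c\,2^{k(\alpha-n)}$ to~$I(A,B)$, and cubes of the same generation give disjoint contributions. Hence
\[
I(A,B)\ge c\,\sigma^{q}\sum_{k=0}^{K} 2^{k(n-1)}2^{k(\alpha-n)} = c\,\sigma^q\sum_{k=0}^K 2^{k(\alpha-1)},
\quad 2^K\simeq \frac{\sigma^{q_0}}{|E|}.
\]

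\textbf{Step 3: summing the series.} For~$\alpha>1$ the sum is~$\simeq 2^{K(\alpha-1)}\simeq (\sigma^{q_0}/|E|)^{\alpha-1}$, which yields~$c\,\sigma^q|E|^{1-\alpha}=c\,\sigma^q/\Psi_\alpha(|E|)$. For~$\alpha=1$ the sum is~$K\simeq\log(\sigma^{q_0}/|E|)$. The smallness assumption~$|E|\le c_\sharp\sigma^q$ guarantees~$K\ge1$, and in fact~$K\gtrsim|\log(|E|/e)|$ after enlarging~$q$. This produces~$1/\Psi_1$. One technicality: the contributions of different generations may overlap across scales. To fix this, we restrict each generation to pairs at distance~$\sim2^{-k}$, or we simply lose a bounded overlap factor.

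\textbf{Main obstacle.} The key difficulty is Step 2, namely showing that the interface between the~$A$-region and the~$B$-region genuinely persists at every scale up to~$K$. The proportions of~$A$- and~$B$-cubes could drift as~$k$ grows. To handle this, we use that the measures are preserved up to errors: the~$A$-cubes at level~$k$ cover~$|A|$ minus the portion lying in mixed or~$E$-cubes. If the number of mixed cubes were large, each of them would already contribute and we would be done. Otherwise the~$A$-type and~$B$-type proportions each stay at least~$\sigma/2$, and the discrete isoperimetric inequality on the grid~$\{1,\dots,2^k\}^n$ gives the~$2^{k(n-1)}\sigma^{(n-1)/n}$ boundary bound. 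Tracking these powers of~$\sigma$ is what fixes~$q$.
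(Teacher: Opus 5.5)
Your overall scheme matches the paper's: cubes of side $r$, a classification into $A$-, $B$-, $E$-type, an isoperimetric count of $A$--$B$ interfaces, the number of $E$-cubes being $\lesssim |E| r^{-n}$ (which limits the usable scales to $r \gtrsim |E|/\sigma^{q_0}$), and a sum over scales. The paper uses a projection/column argument where you use a grid isoperimetric inequality. The gap is in the step you flag as the main obstacle, namely the claim that when mixed cubes are few, the $A$-type and $B$-type cubes still each carry measure $\ge \sigma/2$. With a dimensional threshold $\delta$ this is false, because a $B$-type cube may contain $A$ at any density below $\delta$. The part of $A$ hidden in $B$-type cubes can therefore be up to $\delta |Q_B|$, hence all of $A$ when $\sigma<\delta$. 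For instance, let $A$ be a union of tiny cubes of side $t$ spread uniformly with density $\sigma<\delta$, and let $B$ fill almost all of the rest. Then at every generation with $2^{-k}\gg t$ essentially all cubes are $B$-type. There are no $A$-type and no mixed cubes, the isoperimetric step has nothing to act on, and your argument gets no contribution from any of these scales. For the same reason, $Q_1$ being mixed at $k=0$ requires $\delta\le\sigma$.

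The missing idea is the paper's Cases 1--2. Call $Q$ a $B$-cube if it is not an $E$-cube and $|Q\cap B|\ge\frac{1}{3}|Q|$. For $x\in Q\cap A$, the set $(Q\cap B)\setminus B_{\gamma r}(x)$ has measure at least $(\frac{1}{3}-|B_1|\gamma^n)r^n\gtrsim r^n$ for a small dimensional $\gamma$. So $Q$ contributes $\gtrsim |Q\cap A|\, r^{-\alpha}$ from pairs with $\gamma r\le|x-y|\le\sqrt{n}\,r$. This bound is linear in the $A$-mass and needs no density threshold. Hence if $|Q_B\cap A|\ge r$ you already get $\gtrsim r^{1-\alpha}$ at that scale, and symmetrically for $B$ inside $A$-cubes. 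Only when both leaks are $<r\le\sigma/4$ do you get $|Q_A|,|Q_B|\ge\sigma/2$, which is what the isoperimetric count needs. Taking $\delta\sim\sigma$ would also work, at the price of a $\sigma$-dependent inner cutoff for mixed cubes and a $\log(1/\sigma)$ overlap factor. Finally, ``simply lose a bounded overlap factor'' is not an option. Without an inner cutoff, a pair at distance $\rho$ is counted in every generation with $2^{-k}\gtrsim\rho$. You must restrict each generation to an annulus $|x-y|\in[\gamma 2^{-k}, C2^{-k}]$, and this is exactly where the fixed fraction of $B$ in the cube is used.
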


We refer to~\Cref{Sec:MainEst} for the proof of this result. After Lemma~\ref{Lemma:SVest} is obtained, Theorem~\ref{Th:DeGiorgiIsop} is readily deduced from it---see~\Cref{Sec:MainResult} for the details.

\subsection{An application to weak fractional De Giorgi classes} \label{Subsec:applintro}

Besides their intrinsic interests, isoperimetric inequalities like~\eqref{Eq:LocalIsop} can be used to establish the regularity of solutions to PDEs, minimizers of elliptic functionals, and, more generally, elements of De Giorgi classes. In~\cite{Cozzi-DeGiorgi}, a fractional version of these classes was introduced, that we now proceed to describe following the more stripped-down approach of~\cite{C19}.

Let~$n \in \N$,~$p \in (1, +\infty)$, and~$s \in (0, 1)$. Given a measurable function~$u: \R^n \to \R$, we consider the quantity
$$
\Tail_{s,p} \! \big( {u; B_R(x_0)} \big) \coloneqq \left( \int_{\R^n \setminus B_R(x_0)} \frac{|u(x)|^{p - 1}}{|x - x_0|^{n + s p}} \, dx \right)^{\! \frac{1}{p - 1}},
$$
for~$x_0 \in \R^n$ and~$R > 0$, which is often called~\emph{nonlocal tail} of~$u$---notice the slight change of notation with respect to~\cite{C19}. We say that~$u \in L^{p - 1}_s(\R^n)$ if~$\Tail_{s, p} \! \big( {u; B_R(x_0)} \big) < +\infty$ for every~$x_0 \in \R^n$ and~$R > 0$---or, equivalently, if~$u \in L^{p - 1}_\loc(\R^n)$ and~$\Tail_{s, p} \! \big( {u; B_R(x_0)} \big) < +\infty$ for some~$x_0 \in \R^n$ and~$R > 0$. We also indicate its sub- and superlevel sets inside the ball~$B_R(x_0)$ as
$$
A^-\big( {k, B_R(x_0)} \big) \coloneqq \{ u < k \} \cap B_R(x_0) \quad \mbox{and} \quad A^+ \big( {k, B_R(x_0)} \big) \coloneqq \{ u > k \} \cap B_R(x_0),
$$
for any~$k \in \R$. Thanks to this terminology, we can now define fractional De Giorgi classes. In what follows,~$\Omega$ always denotes a bounded and open subset of~$\R^n$, while to indicate the positive and negative parts of~$u$ we write, as usual,~$u_+ \coloneqq \max \{ u, 0 \}$ and~$u_- \coloneqq \max \{ -u, 0 \}$.

\begin{definition}
Let~$d, H, \lambda$ be three non-negative real numbers. A function~$u \in L^{p - 1}_s(\R^n)$ is said to belong to~$\DG_{\pm}^{s, p}(\Omega; d, H, \lambda)$ if~$u \in W^{s, p}(\Omega)$ and it satisfies
\begin{equation} \label{Eq:DeGclass}
\begin{split}
& [(u - k)_\pm]_{W^{s, p}(B_r(x_0))}^p + \int_{B_r(x_0)} \big( {u(x) - k} \big)_{\pm} \bigg( {\int_{\R^n} \frac{\big( {u(y) - k} \big)_\mp^{p - 1}}{|x - y|^{n + s p}} \, dy} \bigg) dx \\
& \hspace{35pt} \le H \, \Bigg\{ {R^\lambda d^p \left| {A^\pm \big( {k, B_R(x_0)} \big)} \right| + \frac{R^{(1 - s)p}}{(R - r)^p} \| (u - k)_\pm \|_{L^p(B_R(x_0))}^p} \Bigg. \\
& \hspace{35pt} \quad\,\, \Bigg. {+ \frac{R^{n + s p}}{(R - r)^{n + sp}} \, \| (u - k)_\pm \|_{L^1(B_R(x_0))} \Tail_{s,p} \! \big( {(u - k)_\pm; B_r(x_0)} \big)^{p - 1}} \Bigg\},
\end{split}
\end{equation}
for every point~$x_0 \in \Omega$, radii~$0 < r < R < \dist(x_0, \partial \Omega)$, and level~$k \in \R$. We then set~$\DG^{s, p}(\Omega; d, H, \lambda) \coloneqq \DG_-^{s, p}(\Omega; d, H, \lambda) \cap \DG_+^{s, p}(\Omega; d, H, \lambda)$.
\end{definition}

In~\cite{Cozzi-DeGiorgi}, it has been proved that solutions to elliptic equations driven by integrodifferential operators like the fractional~$p$-Laplacian and minimizers of (possibly non-differentiable) nonlocal energies involving Gagliardo type seminorms belong to these classes. Furthermore, the elements of such classes were shown there to be locally bounded and H\"older continuous. It is remarkable that the second term on the first line of~\eqref{Eq:DeGclass} is used crucially to establish their continuity. Note that this term is purely nonlocal and has no analogue in standard, integer-order De Giorgi classes---as considered, for instance, in~\cite{G03}. It is therefore natural to ask what happens when one removes such term from~\eqref{Eq:DeGclass}. This brings us to the following definition of \emph{weak} fractional De Giorgi classes, originally appeared in~\cite{C19}.

\begin{definition}
Let~$d, H, \lambda$ be three non-negative real numbers. A function~$u \in L^{p - 1}_s(\R^n)$ is said to belong to~$\wDG_{\pm}^{s, p}(\Omega; d, H, \lambda)$ if~$u \in W^{s, p}(\Omega)$ and it satisfies
\begin{equation} \label{Eq:weakDeGclass}
\begin{split}
& [(u - k)_\pm]_{W^{s, p}(B_r(x_0))}^p \phantom{+ \int_{B_r(x_0)} \big( {u(x) - k} \big)_{\pm} \bigg( {\int_{\R^n} \frac{\big( {u(y) - k} \big)_\mp^{p - 1}}{|x - y|^{n + s p}} \, dy} \bigg) dx} \\
& \hspace{35pt} \le H \, \Bigg\{ {R^\lambda d^p \left| {A^\pm \big( {k, B_R(x_0)} \big)} \right| + \frac{R^{(1 - s)p}}{(R - r)^p} \| (u - k)_\pm \|_{L^p(B_R(x_0))}^p} \Bigg. \\
& \hspace{35pt} \quad\,\, \Bigg. {+ \frac{R^{n + s p}}{(R - r)^{n + sp}} \, \| (u - k)_\pm \|_{L^1(B_R(x_0))} \Tail_{s,p} \! \big( {(u - k)_\pm; B_r(x_0)} \big)^{p - 1}} \Bigg\},
\end{split}
\end{equation}
for every point~$x_0 \in \Omega$, radii~$0 < r < R < \dist(x_0, \partial \Omega)$, and level~$k \in \R$. We then set~$\wDG^{s, p}(\Omega; d, H, \lambda) \coloneqq \wDG_-^{s, p}(\Omega; d, H, \lambda) \cap \wDG_+^{s, p}(\Omega; d, H, \lambda)$.
\end{definition}

We point out that a different notation was adopted in~\cite{C19} for these classes:~$\wDG^{s, p}$ here corresponds to~$\widetilde{\DG}^{s, p}$ there.

As shown in~\cite{C19}, the weaker Caccioppoli type inequality~\eqref{Eq:weakDeGclass} still leads to~$L^\infty$ estimates. As a result, the elements of these larger De Giorgi classes are locally bounded functions. Their continuity, however, has been ruled out in~\cite{C19} for the case~$s p < 1$ (and~$n = 1$), by means of an explicit counterexample---essentially, the same characteristic function of Remark~\ref{Remark:Characteristic}. The question of whether this was not the case when~$s p \ge 1$ was left open in~\cite{C19}. We answer it here through the following result.

\begin{theorem} \label{Th:DeGHolder}
Let~$n \in \N$,~$s \in (0, 1)$, and~$p \in (1, +\infty)$ be such that~$s p \ge 1$. Let~$\Omega \subset \R^n$ be a bounded open set and~$u \in \wDG^{s, p}(\Omega; d, H, \lambda)$ for some~$d, H, \lambda \ge 0$. Then,~$u \in C^\alpha_\loc(\Omega)$ and it holds
$$
\| u \|_{C^\alpha(B_R(x_0))} \le \frac{C}{R^\alpha} \bigg( {R^{-\frac{n}{p}} \| u \|_{L^p(B_{2 R}(x_0))} + R^{\frac{sp}{p - 1}} \Tail_{s, p} \! \big( {u; B_R(x_0)} \big) + R^{\frac{\lambda + s p}{p}} d} \bigg),
$$
for every~$x_0 \in \Omega$,~$R \in \big( {0,  \dist(x_0, \partial \Omega)/2} \big)$, and for some constants~$\alpha \in (0, 1)$ and~$C \ge 1$ depending only on~$n$,~$s$,~$p$,~$H$, and~$\lambda$.
\end{theorem}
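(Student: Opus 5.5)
We follow the classical De Giorgi scheme for Hölder continuity and use Theorem~\ref{Th:DeGiorgiIsop} in place of the nonlocal term in \eqref{Eq:DeGclass}. By translation and scaling we may take $x_0=0$ and work at unit scale. The rescaled function $u_R(x)=u(Rx)$ lies in a weak class with the same $H,\lambda$ and with $d$ replaced by $R^{(\lambda+sp)/p}d$, which accounts for that term in the final estimate. Local boundedness comes from~\cite{C19}: elements of $\wDG^{s,p}$ obey an $L^\infty$ bound in terms of $\|u\|_{L^p(B_{2R})}$, the tail and $d$. So the whole matter reduces to an oscillation decay:
\[
\operatorname*{osc}_{B_{\rho}} u \le (1-\eta)\,\omega + C\,\text{(tail and $d$ terms at scale 1)}
\]
whenever $\operatorname*{osc}_{B_1}u\le\omega$, for some fixed $\rho\in(0,1/4)$ and $\eta\in(0,1)$. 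Iterating this at geometric scales gives the Hölder estimate. As usual in nonlocal De Giorgi theory, the tails at the smaller scales are handled by splitting $\R^n\setminus B_{\rho^j}$ into annuli and using the oscillation already gained on the larger balls. This is the standard bookkeeping of~\cite{Cozzi-DeGiorgi}, and the exponent $\alpha$ is chosen small so that the geometric sum converges.

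The oscillation decay is proved in two steps.

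\textbf{Step 1 (De Giorgi lemma).} There is $\nu>0$ such that, if $|\{u<\mu^-+\omega/2^{j}\}\cap B_{1/2}|\le \nu|B_{1/2}|$, then $u\ge \mu^-+\omega/2^{j+1}$ in $B_{1/4}$, up to the tail and $d$ errors. This is the usual iteration on shrinking balls and levels. It uses only \eqref{Eq:weakDeGclass} for $(u-k)_-$ and the fractional Sobolev inequality, exactly as in the $L^\infty$ proof of~\cite{C19}, and no nonlocal positivity term is needed.

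\textbf{Step 2 (shrinking the sublevel sets).} Assume without loss of generality that $|\{u\ge \mu^-+\omega/2\}\cap B_{1/2}|\ge \tfrac12|B_{1/2}|$. Set $k_j=\mu^-+\omega/2^{j}$ and apply Theorem~\ref{Th:DeGiorgiIsop}, rescaled to $B_{1/2}$, to the function $v=(u-k_j)_-$ with levels $h=k_{j+1}$ and $k=k_j$; note that $v$ has the same relevant level sets as $u$. The product of the sublevel and superlevel measures is bounded below by $c\,|\{u<k_{j+1}\}\cap B_{1/2}|$, since the superlevel set contains $\{u\ge k_1\}$. The Caccioppoli inequality \eqref{Eq:weakDeGclass}, applied on $B_{1/2}\subset B_1$, gives $[(u-k_j)_-]^p_{W^{s,p}(B_{1/2})}\le C(\omega/2^j)^p$, provided the errors are absorbed, that is, provided $\omega/2^j$ dominates the tail and $d$ terms; otherwise we stop and the oscillation is already small. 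Dividing by $k-h=\omega/2^{j+1}$ yields
\[
|\{u<k_{j+1}\}\cap B_{1/2}|^{\beta}\le C\,\Psi_{sp}\big(|\{k_{j+1}<u<k_j\}\cap B_{1/2}|/|B_{1/2}|\big)^{1/p}.
\]
The interface sets are disjoint for different $j$, so their measures sum to at most $|B_{1/2}|$. Suppose $|\{u<k_{j^*}\}\cap B_{1/2}|>\nu|B_{1/2}|$ for all $j\le j^*$. Then every interface among the first $j^*$ has normalized measure at least $\Psi_{sp}^{-1}(c\nu^{\beta p})$, which is a positive number depending only on $\nu$. This forces $j^*\le C(\nu)$. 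Hence after a bounded number of levels Step 1 applies, and we obtain $\eta=2^{-j^*-1}$.

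The main obstacle is Step 2 in the borderline case $sp=1$, together with the careful absorption of tails. When $sp=1$, $\Psi_1$ decays only logarithmically, but it is still strictly increasing with $\Psi_1(0)=0$. The argument above never needs a rate, only the existence of a positive lower bound on the interface measures, so it still gives a finite (if very large) $j^*$. I will check that the levels and radii are arranged so that the Caccioppoli right-hand side at level $k_j$ stays comparable to $(\omega/2^j)^p$ uniformly in $j\le j^*$, with the tail of $(u-k_j)_-$ controlled by $\omega/2^j$ plus the exterior tail of $u$.
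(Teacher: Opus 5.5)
Your proposal is correct and follows the paper's route: the paper also takes local boundedness, the measure-to-pointwise De Giorgi lemma and the tail bookkeeping from \cites{Cozzi-DeGiorgi,C19}, and proves exactly your Step~2 as a growth lemma. There it bounds $[(u-k)_-]^p_{W^{s,p}(B_2)}\le C k^p$ via the weak Caccioppoli inequality and sums Theorem~\ref{Th:DeGiorgiIsop} over dyadic levels with disjoint interfaces, using only that $\Psi_{sp}$ is increasing with $\Psi_{sp}(0)=0$. The only slip is notational: for $v=(u-k_j)_-$ the levels in Theorem~\ref{Th:DeGiorgiIsop} should be $0$ and $\omega/2^{j+1}$, so that $\{v\le 0\}=\{u\ge k_j\}$ and $\{v\ge \omega/2^{j+1}\}=\{u\le k_{j+1}\}$; this is what your division by $k-h=\omega/2^{j+1}$ actually uses.
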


Theorem~\ref{Th:DeGHolder} follows from the arguments of~\cites{Cozzi-DeGiorgi,C19}, once one establishes a suitable ``growth lemma'' for functions in the weak fractional De Giorgi classes. This lemma is obtained by replicating the argument presented in~\cite{Cozzi-DeGiorgi}*{Lemma~6.3} for the case of~$s$ close to~$1$, in light of the newly found isoperimetric inequalities of Theorem~\ref{Th:DeGiorgiIsop}. We provide the details in~\Cref{Sec:Application}.

We conclude the introduction by pointing out that inequalities like~\eqref{Eq:weakDeGclass} may also appear when dealing with local, integer-order equations, as in~\cite{M11}. Consequently, Theorem~\ref{Th:DeGHolder} could be of interest for such applications as well.

\medskip

\subsection*{Acknowledgments}
Both authors have been supported by the grants PID2021-123903NB-I00 (Spain$^*$),~RED2022-134784-T (Spain$^*$), and by the Madrid Government (Comunidad de Madrid – Spain) under the multiannual Agreement with UAM in the line for the Excellence of the University Research Staff in the context of the V M. PRICIT (Regional Programme of Research and Technological Innovation). The first author has also been supported by the GNAMPA-INdAM project ``Equazioni nonlocali e nonlineari: alcune questioni di esistenza, rigidit\`a e regolarit\`a'' CUP E53C25002010001 (Italy) and by the PRIN project 20229M52AS\_004 ``Partial differential equations and related geometric-functional inequalities'' (Italy), while the second author by the grants~PID2020-113596GB-I00 (Spain$^*$) and~PID2024-156429NB-I00 (Spain$^*$).\\
$^*\,$These projects have been funded by MCIN/AEI/10.13039/501100011033 and by ``ERDF A way of making Europe”.
	
\section{Proof of Lemma~\ref{Lemma:SVest}}		
\label{Sec:MainEst}

\noindent
We revisit the proof of~\cite{SV14}*{Proposition~4.3}, tracking down the dependence on~$\sigma$ of the constant labeled there as~$\delta$. 
For brevity, we shall write~$\varepsilon \coloneqq |E|$.

The core of the proof consists in proving the following main estimate:
given~$\gamma \in (0, 1)$ and setting~$\Gamma \coloneqq 1 / \gamma$, we claim that there exists a constant~$c_\star \in \left( 0, 1/10 \right]$, depending only on~$n$ and~$\alpha$, such that if~$\gamma \le c_\star \sigma^{\frac{2 n - 1}{n}}$ and~$\varepsilon \leq \gamma \sigma / 4 $, then
\begin{equation} 
	\label{Eq:mainintest}
\iint_{\big\{ {(x, y) \in A \times B \, : \, \gamma r \le |x - y| \le \Gamma r} \big\}} \! \frac{dx dy}{|x - y|^{n + \alpha}} \ge c_\star \sigma^{\frac{2n - 1}{n}} r^{1 - \alpha} \quad \mbox{for all } r \in \left [ \dfrac{\varepsilon}{\gamma}, \dfrac{\sigma}{4} \right] \mbox{ s.t.~} \frac{1}{r} \in \N.
\end{equation}
Once this is established in the upcoming Step~1, the result will follow by using~\eqref{Eq:mainintest} with values of~$r$ comparable to~$\varepsilon$---see Step~2 for the details.

\medskip

\noindent
\textbf{Step 1.  Proof of the main estimate.}
To establish~\eqref{Eq:mainintest}, we decompose~$Q_1$ (up to sets of measure zero) into a family~$\K$ of~$r^{-n}$ non-overlapping open cubes of sides~$r$, with $r$ as in~\eqref{Eq:mainintest}, that is
$$
\K \coloneqq \Big\{ {(0, r)^n + r v : v \in \{ 0, 1, \ldots, r^{-1} - 1 \}^n} \Big\}.
$$
We then define
\begin{equation}
\begin{split}
\K_E & \coloneqq \left\{ Q \in \K : |Q \cap E| \ge \frac{1}{3} \, |Q| \right\}, \\
\K_B & \coloneqq \left\{ Q \in \K \setminus \K_E : |Q \cap B| \ge \frac{1}{3} \, |Q| \right\}, \\
\K_A & \coloneqq \vphantom{\bigg\{} \K \setminus \Big( {\K_E \cup \K_B} \Big), \\
Q_E & \coloneqq \vphantom{\bigg\{} \bigcup_{Q \in \K_E} Q, \quad Q_B \coloneqq \bigcup_{Q \in \K_B} Q, \quad Q_A \coloneqq \bigcup_{Q \in \K_A} Q.
\end{split}
\end{equation}
Observe that the sets~$\K_A$,~$\K_B$, and~$\K_E$ form a partition of~$\K$. Also,
\begin{equation} \label{massforKA}
|Q \cap A| \ge \frac{1}{3} \, |Q| \quad \mbox{for all } Q \in \K_A.
\end{equation}
Thanks to the definition of~$\K_E$ and~\eqref{massforKA}, we infer that
\begin{equation} \label{QEest}
|Q_E| = \sum_{Q \in \K_E} |Q| \le 3 \sum_{Q \in \K_E} |Q \cap E| \le 3 |E| = 3 \varepsilon
\end{equation}
and
\begin{equation} \label{QAest}
|Q_A| = \sum_{Q \in \K_A} |Q| \le 3 \sum_{Q \in \K_A} |Q \cap A| \le 3 |A|.
\end{equation}

We consider three different cases depending on the sizes of the sets $|Q_B \cap A|$ and $|Q_A \cap B|$.

\smallskip
\noindent
$\bullet$ \textbf{Case 1.}
Suppose that
\begin{equation} \label{QBinAbig}
|Q_B \cap A| \ge r.
\end{equation}
First, note that~$Q \subset B_{\Gamma r}(x)$ for all~$Q \in \K$ and~$x \in Q$, provided~$\gamma < 1/\sqrt{n}$. 
Hence, under this assumption on $\gamma$, given~$Q \in \K_B$ we estimate
\begin{equation}
\begin{split}
\int_{Q \cap A} \left( \int_{B \cap \left( B_{\Gamma r}(x) \setminus B_{\gamma r}(x) \right)} \frac{dy}{|x - y|^{n + \alpha}} \right) dx & \ge \int_{Q \cap A} \left( \int_{(B \cap Q) \setminus B_{\gamma r}(x)} \frac{dy}{|x - y|^{n + \alpha}} \right) dx \\
& \ge \int_{Q \cap A} \frac{\left| (B \cap Q) \setminus B_{\gamma r}(x) \right|}{(\sqrt{n} r)^{n + \alpha}} \, dx \\
& \ge \frac{|Q \cap A| \left( \frac{1}{3} - |B_1| \gamma^n \right)}{n^{\frac{n + \alpha}{2}} r^\alpha} \ge c_1 |Q \cap A| \, r^{-\alpha},
\end{split}
\end{equation}
for some~$c_1 > 0$ depending only on~$n$ and~$\alpha$, provided~$\gamma\le (6 |B_1|)^{-\frac{1}{n}}$. 
Thanks to this and~\eqref{QBinAbig}, we obtain that
\begin{equation}
\begin{split}
\iint_{\big\{ {(x, y) \in A \times B \, : \, \gamma r \le |x - y| \le \Gamma r} \big\}} \frac{dx dy}{|x - y|^{n + \alpha}} & \ge \sum_{Q \in \K_B} \int_{Q \cap A} \left( \int_{B \cap \left( B_{\Gamma r}(x) \setminus B_{\gamma r}(x) \right)} \frac{dy}{|x - y|^{n + \alpha}} \right) dx \\
& \ge c_1 \, r^{-\alpha} \sum_{Q \in \K_B} |Q \cap A| =  c_1 \, r^{-\alpha} |Q_B \cap A| \\
& \ge c_1 \, r^{1 - \alpha},
\end{split}
\end{equation}
which gives~\eqref{Eq:mainintest} provided $\gamma < \min \Big\{ {1/\sqrt{n}, (6 |B_1|)^{-\frac{1}{n}}} \Big\}$. 

\smallskip

\noindent
$\bullet$ \textbf{Case 2.} If~$|Q_A \cap B| \ge r$, then the same conclusion is reached by swapping the roles of~$A$ and~$B$ in the previous argument.

\smallskip

\noindent
$\bullet$ \textbf{Case 3.} It holds
\begin{equation}
	\label{Eq:QBinA/QAinBsmall}
	\max \Big\{ {|Q_B \cap A|, |Q_A \cap B|} \Big\} < r.
\end{equation}
We further divide our analysis depending on whether~$n = 1$ or~$n \ge 2$.

\smallskip

\noindent
\textbf{- Case 3.1. Dimension~$n=1$.}
We first notice that if~$\varepsilon \leq \gamma r$ with~$\gamma < 1/3$, then $\K_E = \varnothing$.
Indeed, if~$\K_E \neq \varnothing$ then there exists~$Q_\star \in \K_E$ and thus
\begin{equation}
	\frac{r}{3} = \frac{|Q_\star|}{3} \leq |Q_\star \cap E| \leq |E| = \varepsilon \leq \gamma r,
\end{equation}
which is a contradiction for~$\gamma < 1/3$.
Thus, we have that~$Q_1 = (0,1)$ coincides, up to sets of zero measure, with~$Q_A \cup Q_B$.
Hence, using~\eqref{ABaresigmafat} and~\eqref{Eq:QBinA/QAinBsmall}, we obtain
$$
|Q_A| \ge |Q_A \cap A| = |A|  - |Q_B \cap A| \ge \sigma - r  \ge \frac{\sigma}{2}
$$
since~$r\leq \sigma/2$, showing that~$\K_A \neq \varnothing$.
Similarly,~$|Q_B|\geq \sigma/2$ and thus~$\K_B \neq \varnothing$. 

From the previous considerations, it is not difficult to show that there exist two intervals~$Q^{(a)} \in \K_A$ and~$Q^{(b)} \in \K_B$ such that~$\dist \left( Q^{(a)}, Q^{(b)} \right) \in \left\{ r, 2 r \right\}$.
Moreover, for every~$x \in Q^{(a)}$ and~$y \in Q^{(b)}$ we have that~$r < |x - y| < 4 r \le \Gamma r$, provided~$\gamma \le 1/4$. Consequently,~$Q^{(b)} \subset B_{\Gamma r}(x) \setminus B_{\gamma r}(x)$ for all~$x \in Q^{(a)}$ and thus
\begin{equation}
	\begin{split}
	\int_{A} \left( \int_{B \cap \left( B_{\Gamma r}(x) \setminus B_{\gamma r}(x) \right)} \frac{dy}{|x - y|^{1 + \alpha}} \right) dx 
	&\ge \int_{A \cap Q^{(a)}} \left( \int_{B \cap Q^{(b)}} \frac{dy}{|x - y|^{1 + \alpha}} \right) dx  \\
	&\ge \dfrac{|A \cap Q^{(a)}| |B \cap Q^{(b)}| }{ (4r)^{1+ \alpha}} \ge \dfrac{1}{9 \cdot 4^{1+\alpha}} \, r^{1 - \alpha},
	\end{split}
\end{equation}
which gives~\eqref{Eq:mainintest}.

\smallskip

\noindent
\textbf{- Case 3.2. Dimension~$n\geq 2$.}
This case requires a bit more of work.
The idea is to mimic the one-dimensional argument in columns of cubes (along a certain direction) where we have cubes of~$\K_A$ and~$\K_B$ but not in~$\K_E$.
For a given column satisfying the previous property, the same argument as in dimension one gives a lower bound of the form~$c \, r^{n-\alpha}$ for some~$c>0$, which leads to the desired estimate if we show that the number of such columns is comparable to~$r^{1-n}$.
Note that in this case we cannot reduce ourselves to the case~$\K_E=\varnothing$---this would require~$\varepsilon < r^n/3$, whereas~$\varepsilon$ will need to be comparable to~$r$. Therefore, we will also need to estimate the number of columns with cubes in~$\K_E$.
The details are as follows.

First of all, thanks to~\eqref{Eq:QBinA/QAinBsmall},~\eqref{ABaresigmafat}, and~\eqref{QEest}, we see that
$$
|Q_A| \ge |Q_A \cap A| = |A| - |Q_E \cap A| - |Q_B \cap A| \ge |A| - |Q_E| - r \ge \sigma - 3 \varepsilon - r \ge \frac{\sigma}{2}
$$
and
$$
|Q_B| \ge |Q_B \cap B| = |B| - |Q_E \cap B| - |Q_A \cap B| \ge |B| - |Q_E| - r \ge \sigma - 3 \varepsilon - r \ge \frac{\sigma}{2},
$$
since~$r \le \frac{\sigma}{4}$ and~$\varepsilon \le \frac{\sigma}{12}$, provided we take~$\gamma \leq \frac{1}{3}$. In particular,
\begin{equation} \label{QAbounds}
	|Q_A| \in \left[ \frac{\sigma}{2}, 1 - \frac{\sigma}{2} \right].
\end{equation}

Now, in light of~\cite{SV14}*{Lemma~4.1}, the projection of~$Q_A$ along at least one of the Cartesian axes has~$(n - 1)$-dimensional measure greater or equal to~$|Q_A|^{\frac{n - 1}{n}}$. 
Then, up to relabeling the axis, we may assume that
\begin{equation} \label{QAfatproj}
\Haus^{n - 1} \big( {\Pi_{e_n}(Q_A)} \big) \ge |Q_A|^{\frac{n - 1}{n}},
\end{equation}
where~$\Pi_{e_n}$ denotes the projection along~$e_n$.
We then consider the following columns of cubes.
First, we set
\begin{equation}
	\C_A  \coloneqq \left\{ Q_1 \cap \bigcup_{j \in \Z} \left( Q + r j e_n \right) : Q \in \K_A \right\},
\end{equation}
which are the columns of cubes which contain cubes in~$\K_A$.
We denote by~$M_A$ the cardinality of~$\C_A$.
Then, among the columns of~$\C_A$ we distinguish three families of columns:
\begin{equation}
\begin{split}
\C_E & \coloneqq \left\{ C \in \C_A : C = Q_1 \cap \bigcup_{j \in \Z} \left( Q + r j e_n \right) \mbox{ for some } Q \in \K_E \right\}, \\
\C_B & \coloneqq \left\{ C \in \C_A \setminus \C_E : C = Q_1 \cap \bigcup_{j \in \Z} \left( Q + r j e_n \right) \mbox{ for some } Q \in \K_B \right\},
\end{split}
\end{equation}
and~$\C_A \setminus \left( \C_E \cup \C_B \right)$---the columns which are only constituted by cubes of~$\K_A$.
We denote with~$m_E$,~$m_B$, and~$m_A$ respectively the cardinalities of these three families.
Our goal is to get a good lower bound for~$m_B$, the number of columns which have at least one cube in~$\K_A$ and one in~$\K_B$ but none in~$\K_E$.
Clearly, it holds
$$
m_B = M_A  - m_A- m_E.
$$
We thus proceed to estimate separately these last three quantities.

First, thanks to~\eqref{QAbounds},
\begin{equation} \label{MAbound}
M_A \, r^{n-1}= \Haus^{n - 1} \big( {\Pi_{e_n}(Q_A)} \big) \ge |Q_A|^{\frac{n - 1}{n}} .
\end{equation}
Second, since~$m_A$ is the number of columns with only cubes in~$\K_A$, we deduce that
\begin{equation} \label{mAbound}
 m_A \, r^{n - 1} \le |Q_A|.
\end{equation}
Last, if~$C \in \C_E$, then there exists~$Q' \in \K_E$ such that~$Q' \subset C$ and thus
$$
|C \cap E| \ge |Q' \cap E| \ge \frac{1}{3} \, |Q'| = \frac{r^n}{3}.
$$
Consequently,
$$
\varepsilon = |E| \ge \left| E \cap \bigcup_{C \in \C_E} C \right| = \sum_{C \in \C_E} |C \cap E| \ge \frac{m_E \, r^n}{3},
$$
which, recalling that~$\varepsilon \le \gamma r$, yields the bound
\begin{equation} \label{mEbound}
m_E \le \frac{3 \gamma}{r^{n - 1}}.
\end{equation}

Thanks to~\eqref{MAbound},~\eqref{mAbound},~\eqref{mEbound}, and~\eqref{QAbounds}, we estimate
\begin{equation} \label{MBbound}
\begin{split}
m_B & = M_A - m_A - m_E \ge r^{1 - n} \left\{ |Q_A|^{\frac{n - 1}{n}} - |Q_A| - 3 \gamma \right\} \\
& = r^{1 - n}  \left\{ |Q_A|^{\frac{n - 1}{n}} \left( 1 - |Q_A|^{\frac{1}{n}} \right) - 3 \gamma \right\}  \\ 
& \ge r^{1 - n} \left\{ \left( \frac{\sigma}{2} \right)^{\! \frac{n - 1}{n}} \left( 1 - \left( 1 - \frac{\sigma}{2} \right)^{\! \frac{1}{n}} \right)  - 3 \gamma \right\} \\
& \ge c_2 \, \sigma^{\frac{2 n - 1}{n}} r^{1 - n},
\end{split}
\end{equation}
for some~$c_2 > 0$ depending only on~$n$, provided~$\gamma \le c_2 \, \sigma^{\frac{2 n - 1}{n}}$.

Let~$C \in \C_B$. Then,~$C \subset Q_A \cup Q_B$,~$C \cap Q_A \ne \varnothing$, and~$C \cap Q_B \ne \varnothing$. 
In light of this, it is easy to see that there exist two cubes~$Q^{(a)} \in \K_A$ and~$Q^{(b)} \in \K_B$ with~$Q^{(a)} \cup Q^{(b)} \subset C$ and such that~$\dist \left( Q^{(a)}, Q^{(b)} \right) \in \left\{ r, 2 r \right\}$. As a result,
$$
\int_{A \cap Q^{(a)}} \int_{B \cap Q^{(b)}} \frac{dx dy}{|x - y|^{n + \alpha}} \ge \frac{|A \cap Q^{(a)}| |B \cap Q^{(b)}|}{\big( {3 + \sqrt{n}} \big)^{n + \alpha} r^{n + \alpha}} \ge \frac{|Q^{(a)}| |Q^{(b)}|}{9 \big( {3 + \sqrt{n}} \big)^{n + \alpha} r^{n + \alpha}} \eqqcolon c_3 \, r^{n - \alpha}.
$$
Also, for every~$x \in Q^{(a)}$ and~$y \in Q^{(b)}$ we have that~$r < |x - y| < (3 + \sqrt{n}) r \le \Gamma r$, provided $\gamma \le \frac{1}{3 + \sqrt{n}}$. Consequently,~$Q^{(b)} \subset B_{\Gamma r}(x) \setminus B_{\gamma r}(x)$ for all~$x \in Q^{(a)}$ and thus
$$
\int_{A \cap C} \left( \int_{B \cap \left( B_{\Gamma r}(x) \setminus B_{\gamma r}(x) \right)} \frac{dy}{|x - y|^{n + \alpha}} \right) dx \ge \int_{A \cap Q^{(a)}} \left( \int_{B \cap Q^{(b)}} \frac{dy}{|x - y|^{n + \alpha}} \right) dx \ge c_3 \, r^{n - \alpha}.
$$
By adding these inequalities up as~$C \in \C_B$ and recalling~\eqref{MBbound} we are easily led to~\eqref{Eq:mainintest}.

\medskip

\noindent
\textbf{Step 2. Conclusion.} To finish the proof, let~$c_\star$ be the constant, depending on~$n$ and~$\alpha$, found in~\eqref{Eq:mainintest} and set
$$
\gamma \coloneqq \left\lceil \frac{1}{c_\star \sigma^{\frac{2n - 1}{n}}} \right\rceil^{-1} \le c_\star \sigma^{\frac{2n - 1}{n}}.
$$
Let~$k_0$ be the largest integer for which~$\gamma^{2 k_0 + 1} \ge \varepsilon$.
Notice that if~$\varepsilon \leq \big( {\frac{c_\star}{2} \, \sigma^{\frac{2n - 1}{n}}} \big)^3$, then~$\varepsilon\leq \gamma^3$.
Indeed, if $N\geq 1$ is the unique natural number such that~$\gamma = \frac{1}{N+1} \leq c_\star  \sigma^{\frac{2n - 1}{n}} < \frac{1}{N}$, we have~$\varepsilon \leq \left(\frac{1}{2N}\right)^3 \leq (N+1)^{-3} = \gamma^3$.
As a consequence, we have  $k_0\geq 1$.

Now, for every~$k \in \{ 1, \ldots, k_0 \}$, we set~$r_k \coloneqq \gamma^{2 k}$. 
Note that~$\frac{1}{r_k} \in \N$ and~$r_k \in \left[ \varepsilon/\gamma, \sigma/4 \right]$. 
Hence, we may apply~\eqref{Eq:mainintest} with~$r = r_k$ and deduce that
$$
\iint_{\big\{ {(x, y) \in A \times B \, : \, \gamma^{2 k + 1} \le |x - y| \le \gamma^{2k - 1}} \big\}} \frac{dx dy}{|x - y|^{n + \alpha}} \ge c_\star \, \sigma^{\frac{2n - 1}{n}} \gamma^{(1 - \alpha) 2k} \quad \mbox{for all } k \in \{ 1, \ldots, k_0 \}.
$$
Summing up over~$k$, we obtain
\begin{equation}
\int_A \int_B \frac{dx dy}{|x - y|^{n + \alpha}}  \ge \sum_{k = 1}^{k_0} \iint_{\big\{ {(x, y) \in A \times B \, : \, \gamma^{2 k + 1} \le |x - y| \le \gamma^{2k - 1}} \big\}} \frac{dx dy}{|x - y|^{n + \alpha}} 
 \ge c_\star \sigma^{\frac{2n - 1}{n}} \sum_{k = 1}^{k_0} \gamma^{(1 - \alpha) 2k}.
\end{equation}
On the one hand, for~$\alpha > 1$ we use that~$\gamma^{2k_0 + 3}<\varepsilon$ and~$\gamma \leq c_\star \sigma^{\frac{2n - 1}{n}}<2\gamma$ to conclude that
\begin{equation}
	\int_A \int_B \frac{dx dy}{|x - y|^{n + \alpha}} \ge c_\star \sigma^{\frac{2n - 1}{n}} \gamma^{(1 - \alpha) 2k_0} \ge c_\circ \frac{\sigma^{\frac{(2n - 1)(3 \alpha - 2)}{n}}}{\varepsilon^{\alpha - 1}} = c_\circ \frac{\sigma^{\frac{(2n - 1)(3 \alpha - 2)}{n}}}{|E|^{\alpha - 1}},
\end{equation}
% \begin{cases}
%\gamma^{(1 - \alpha) 2k_0} & \quad \mbox{if } \alpha > 1, \\
%k_0 & \quad \mbox{if } \alpha = 1.
%\end{cases}
for some~$c_\circ > 0$ depending only on~$n$ and~$\alpha$.
On the other hand, for~$\alpha = 1$ we have
\begin{equation}
	\int_A \int_B \frac{dx dy}{|x - y|^{n + 1}} \ge c_\star \sigma^{\frac{2n - 1}{n}} k_0.
\end{equation}
Since~$\gamma^{2k_0 + 3}<\varepsilon$, using that~$\gamma \leq c_\star \sigma^{\frac{2n - 1}{n}}\leq \sigma^{\frac{2n - 1}{n}}$ and~$\varepsilon \le \frac{1}{4}$ it follows that 
\begin{equation}
	5 k_0 \geq 2k_0 + 3 \geq \dfrac{|{\log \varepsilon}|}{|{\log \gamma}|} \geq \dfrac{n}{2n -1} \dfrac{|{\log \varepsilon}|}{|{\log \sigma}|} \ge \dfrac{\left| \log \! \left(\frac{\varepsilon}{e} \right) \right|}{8 |{\log \sigma}|}.
\end{equation} 
Thus,
\begin{equation}
	\int_A \int_B \frac{dx dy}{|x - y|^{n + 1}} \ge c_\circ  \dfrac{\sigma^{\frac{2n - 1}{n}}}{|{\log \sigma}|} \left| \log \! \left(\frac{\varepsilon}{e} \right) \right|,
\end{equation}
for some~$c_\circ > 0$ depending only on~$n$. This leads to estimate~\eqref{Eq:interacest} also when~$\alpha = 1$ and concludes the proof of Lemma~\ref{Lemma:SVest}.

\section{Proof of Theorem~\ref{Th:DeGiorgiIsop}}	
\label{Sec:MainResult}

\noindent
First of all, by scaling we may reduce ourselves to the case~$h = 0$ and~$k = 1$. Secondly, it suffices to prove the estimate for interfaces of small densities, i.e., for which~$\frac{|B_1 \cap \{ 0 < u < 1 \}|}{|B_1|} \le \delta$, with~$\delta \in \left( 0, \frac{1}{10} \right]$ fixed. Indeed, if instead~$\frac{|B_1 \cap \{ 0 < u < 1 \}|}{|B_1|} > \delta$, then it trivially holds
\begin{equation} \label{trivialtechest}
\begin{split}
[u]_{W^{s, p}(B_1)}^p & \ge \int_{B_1 \cap \{ u \le 0 \}} \int_{B_1 \cap \{ u \ge 1 \}} \frac{dx dy}{|x - y|^{n + sp}} \ge \frac{|B_1 \cap \{ u \le 0 \}| |B_1 \cap \{ u \ge 1 \}|}{2^{n + sp}} \\
& \ge \frac{\Psi_{s p}(\delta)}{2^{n + sp}} \, \dfrac{|B_1 \cap \{ u \le 0 \}| |B_1 \cap \{ u \ge 1 \}|}{\Psi_{s p} \! \left( \frac{|B_1 \cap \{ 0 < u < 1 \}|}{|B_1|} \right)},
\end{split}
\end{equation}
where we took advantage of the monotonicity of the function~$\Psi_\alpha$---see~\eqref{Eq:Psialphadef} for its definition.

To deal with interfaces of small densities, we transfer the problem from the unit ball to the unit cube and take advantage of the inequality of Lemma~\ref{Lemma:SVest}. To this aim, we consider a bi-Lipschitz diffeomorphism~$\Phi: \overline{Q_1} \to \overline{B_1}$---such as the composition of the map~$x \mapsto \big( {|x|_\infty / |x|} \big) x$, where~$| \cdot |_\infty$ is the~$\infty$-norm in~$\R^n$, with an affine transformation mapping~$\overline{Q_1}$ onto~$[-1, 1]^n$. Let~$v \coloneqq u \circ \Phi$ and note that it suffices to establish our estimate with~$B_1$ replaced by~$Q_1$ and~$u$ by~$v$, if~$\delta$ is small enough compared to the Lipschitz norm of~$\Phi$ (this is only really needed when~$s p = 1$). Indeed,
\begin{equation}
\begin{split}
[v]_{W^{s, p}(Q_1)} & \le M [u]_{W^{s, p}(B_1)}, \vphantom{\frac{|B_1 \cap \{ 0 < u < 1 \}|}{|B_1|}} \\
|B_1 \cap \{ u \le 0 \}| |B_1 \cap \{ u \ge 1 \}| & \le M |Q_1 \cap \{ v \le 0 \}| |Q_1 \cap \{ v \ge 1 \}|,  \\
|Q_1 \cap \{ 0 < v < 1 \}| & \le M \frac{|B_1 \cap \{ 0 < u < 1 \}|}{|B_1|},
\end{split}
\end{equation}
for some~$M > 1$ depending only on~$n$,~$s$,~$p$, and~$\Phi$. From this, our claim immediately follows when~$s p > 1$. It follows as well when~$s p = 1$ provided we take~$\delta \le \frac{1}{2 M}$, since then~$M \frac{|B_1 \cap \{ 0 < u < 1 \}|}{|B_1|} \le \frac{1}{2}$ and therefore
\begin{equation}
\begin{split}
\Psi_1 \Big( {|Q_1 \cap \{ 0 < v < 1 \}|} \Big) & \le \Psi_1 \! \left( M \frac{|B_1 \cap \{ 0 < u < 1 \}|}{|B_1|} \right) \\
& \le \frac{1}{1 + \frac{\log M}{\log \left( \frac{\delta}{e} \right)}} \, \Psi_1 \! \left( \frac{|B_1 \cap \{ 0 < u < 1 \}|}{|B_1|} \right),
\end{split}
\end{equation}
as can be easily deduced from the fact that the function~$t \mapsto \frac{\Psi_1(Mt)}{\Psi_1(t)}$ is increasing in~$(0, \delta]$.

We are thus left to establish our estimate for~$v$ in the fractional Sobolev space~$W^{s, p}$ over the cube~$Q_1$. Set
$$
A \coloneqq Q_1 \cap \{ v \le 0 \}, \quad B \coloneqq Q_1 \cap \{ v \ge 1\}, \quad E \coloneqq Q_1 \setminus \left( A \cup B \right) = Q_1 \cap \{ 0 < v < 1 \}.
$$
When either~$A$ or~$B$ has measure zero, the inequality is trivially verified. Hence, we may assume without loss of generality that~$\sigma \coloneqq \min \{ |A|, |B| \} > 0$. Recalling~\eqref{Eq:seminormcontrolsinteraction}, we see that the nonlocal interaction~$I(A, B)$ is bounded above by the~$p$-th power of the~$W^{s, p}$-seminorm of~$v$ and is thus finite. Hence, in view of~\cite{B02}*{Corollary~2}, we infer that~$|E| > 0$.
Let~$c_\sharp \in (0, 1)$ and~$q \ge 1$ be the constants found in Lemma~\ref{Lemma:SVest}, in correspondence to~$\alpha = sp$. In view of this result, if~$|E| \le c_\sharp \, \sigma^q$, then
$$
\int_A \int_B \frac{dx dy}{|x - y|^{n + sp}} \ge c_\sharp \, \frac{\sigma^q}{\Psi_\alpha \big( {|E|} \big)}.
$$
If, on the other hand~$|E| > c_\sharp \, \sigma^q$, then much like in~\eqref{trivialtechest} we simply estimate
$$
\int_A \int_B \frac{dx dy}{|x - y|^{n + s p}} \ge \frac{|A| |B|}{n^{\frac{n + sp}{2}}} \ge \frac{\sigma^2}{n^{\frac{n + sp}{2}}} \begin{dcases}
c_\sharp^{sp - 1} \sigma^{q (sp - 1)} |E|^{1 - sp} & \quad \mbox{if } sp > 1, \\
\frac{\left| \log \left( \frac{|E|}{e} \right) \right|}{\left| \log \left( \frac{c_\sharp \sigma^q}{e} \right) \right|} & \quad \mbox{if } sp = 1.
\end{dcases}
$$
From the last two inequalities,~\eqref{Eq:seminormcontrolsinteraction}, and the fact that~$\sigma = \min \big\{ {|A|, |B|} \big\} \ge |A| |B|$, our claim easily follows, with~$\beta$ sufficiently large in dependence of~$n$,~$s$, and~$p$.

\section{Proof of Theorem~\ref{Th:DeGHolder}}	
\label{Sec:Application}

\noindent
The local boundedness of~$u$ is already been established in~\cite{C19}*{Theorem~2.2}. To obtain its H\"older continuity, we follow the strategy of~\cites{Cozzi-DeGiorgi,C19}. In particular, recalling the discussions succeeding the statement of~\cite{C19}*{Lemma~2.5}, it suffices to show that functions belonging to weak De Giorgi classes satisfy a suitable growth lemma. More precisely, Theorem~\ref{Th:DeGHolder} boils down to proving the following result.

\begin{lemma}
	Let~$n \in \N$,~$s\in (0,1)$, and~$p \in (1, +\infty)$ be such that~$s p \ge 1$. Let~$H, \lambda \ge 0$. For every~$\tau \in (0, 1)$, there exists~$\delta \in (0, 1/8]$, depending only on~$n$,~$s$,~$p$,~$H$,~$\lambda$, and~$\tau$, such that if, for~$d \ge 0$, a function~$u\in \wDG_{-}^{s, p}(B_4; d, H, \lambda)$ satisfies
	\begin{gather}
		\label{Eq:u>0}
		u \ge 0 \quad \mbox{in } B_4, \\
		\label{Eq:Densityu>1} \dfrac{|B_2 \cap \{u\geq 1\}|}{|B_2|} \ge \dfrac{1}{2}, \\
		\label{Eq:SmallnessHypothesis}
		d + \Tail_{s,p} \! \big( {u_-; B_4} \big) \leq \delta,
	\end{gather}
	then
	\begin{equation}
		\label{Eq:Densityu<2delta}
		\dfrac{|B_2 \cap \{u< 2\delta\}|}{|B_2|} \le \tau.
	\end{equation}
\end{lemma}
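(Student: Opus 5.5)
The plan is to mimic the classical De Giorgi growth-lemma argument, as in \cite{Cozzi-DeGiorgi}*{Lemma~6.3}, using the isoperimetric inequality of Theorem~\ref{Th:DeGiorgiIsop} in place of the $s$-close-to-$1$ version. We consider the dyadic levels $k_j \coloneqq 2^{-j}$ for $j = 0, 1, \ldots, j_\star$, where $j_\star$ is chosen so that $2^{-j_\star} \ge 2\delta$. We then estimate the measures $a_j \coloneqq |B_2 \cap \{u < k_j\}|/|B_2|$, which are non-increasing in $j$. Argue by contradiction: suppose $a_{j_\star} > \tau$. Then $a_j > \tau$ for all $j \le j_\star$.

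\textbf{Step 1 (energy bound).} We apply the Caccioppoli inequality \eqref{Eq:weakDeGclass} for $(u - k_j)_-$ with $x_0 = 0$, $r = 2$, $R = 3$, and $k = k_j$. Since $u \ge 0$ in $B_4$, we have $(u-k_j)_- \le k_j$ in $B_3$. Hence the first two terms on the right are bounded by $C(d^p + k_j^p)$. For the tail, we use $(u - k_j)_- \le k_j + u_-$ and \eqref{Eq:SmallnessHypothesis} to get
\[
\Tail_{s,p}\big((u-k_j)_-; B_2\big) \le C\big(k_j + \Tail_{s,p}(u_-;B_4)\big) \le C(k_j + \delta).
\]
Since $k_j \ge 2\delta \ge d$, this yields $[(u-k_j)_-]^p_{W^{s,p}(B_2)} \le C k_j^p$, with $C$ depending only on $n, s, p, H, \lambda$.

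\textbf{Step 2 (isoperimetric inequality).} We apply Theorem~\ref{Th:DeGiorgiIsop}, rescaled to $B_2$, to $w \coloneqq (u - k_j)_-$ with the levels $h = k_{j+1}/2$ and $k = k_j - k_{j+1} = k_{j+1}$ for $w$. Here $\{w \ge k_{j+1}\} = \{u \le k_{j+1}\}$, which has density greater than $\tau$. Also $\{w \le k_{j+1}/2\} \supset \{u \ge k_j\} \supset \{u \ge 1\}$, which has density at least $1/2$. The interface is $D_j \coloneqq B_2 \cap \{k_{j+1} < u < 3k_{j+1}/2\}$, up to null sets. The choice of these levels is only a convenience. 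We then obtain
\[
c(\tau) \le \frac{C}{k_{j+1}}\, C k_j\, \Psi_{sp}\big(|D_j|/|B_2|\big)^{1/p}.
\]
Thus $\Psi_{sp}(|D_j|/|B_2|) \ge c_0(\tau) > 0$, so $|D_j|/|B_2| \ge \eta(\tau) > 0$ because $\Psi_{sp}$ is strictly increasing and continuous with $\Psi_{sp}(0)=0$. The constant $\eta$ depends only on $n, s, p, H, \lambda, \tau$.

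\textbf{Step 3 (counting).} The sets $D_j$ are pairwise disjoint, since they lie between consecutive dyadic levels. Summing over $j < j_\star$ gives $j_\star\, \eta(\tau) \le 1$. We choose $j_\star > 1/\eta(\tau)$ and then $\delta \coloneqq 2^{-j_\star - 1}$, which is at most $1/8$. This gives the contradiction, hence \eqref{Eq:Densityu<2delta}.

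The main obstacle is Step 1. We must check that the tail and $d$ contributions are genuinely absorbed into $C k_j^p$ uniformly in $j$; this is exactly why we stop the iteration at $k_j \ge 2\delta$. We must also handle the scaling of the radii and the power $R^\lambda$, which is harmless since $R = 3$ is fixed. For Step 2, the fact that Theorem~\ref{Th:DeGiorgiIsop} allows $\Psi_1$ of logarithmic type when $sp = 1$ causes no trouble, because we only need a positive lower bound on the interface measure, not a rate.
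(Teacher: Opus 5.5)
Your proposal is correct and follows essentially the paper's route. You derive a uniform bound $[(u-k_j)_-]_{W^{s,p}(B_2)}^p\le Ck_j^p$ for dyadic levels $k_j\gtrsim\delta$ from the Caccioppoli inequality with $r=2$, $R=3$, then apply the rescaled isoperimetric inequality between consecutive dyadic levels and sum over the pairwise disjoint interfaces. The differences are cosmetic: you argue by contradiction with a fixed lower bound $\eta(\tau)$ on each interface density and truncate at $k_j$ instead of $k_{j-1}$, whereas the paper inverts $\Psi_{sp}$ and sums directly, obtaining the explicit bound $|B_2\cap\{u\le 2\delta\}|^\gamma\le C_4\,\Psi_{sp}(2/|\log\delta|)$.
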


\begin{proof}
	First of all, after a simple rescaling we see that every~$v \in W^{s, p}(B_2)$ satisfies the isoperimetric inequality of Theorem~\ref{Th:DeGiorgiIsop} in the ball~$B_2$, that is
	\begin{equation}
		\label{Eq:isopineinB2}
		\left[ { \frac{|B_2 \cap \{ v \le h \}|}{|B_2|} \frac{|B_2 \cap \{ v \ge k \}|}{|B_2|}} \right]^{\gamma} \le \frac{C_\circ}{(k - h)^p} \, [v]_{W^{s, p}(B_2)}^p \Psi_{sp} \! \left( \frac{|B_2 \cap \{ h < v < k \}|}{|B_2|} \right),
	\end{equation}
	for every~$0 \le h < k$ and for some constants~$C_\circ, \gamma > 0$ depending only on~$n$,~$s$, and~$p$. The idea is now to combine the Caccioppoli inequality defining weak De Giorgi classes with~\eqref{Eq:isopineinB2}, applied at suitable truncations of~$u$ and with different pairs of levels~$(h, k)$, to eventually obtain that the sublevel set~$B_2 \cap \{ u < 2 \delta \}$ becomes smaller and smaller in measure, as~$\delta$ approaches zero. It is worth noting that the only properties of the function~$\Psi_{sp}$ that we will use are its monotonicity and the fact that~$\lim_{t \downarrow 0}\Psi_{sp}(t) = 0$.

	Let~$\tau \in (0, 1)$ be fixed and~$\delta \in (0, 1)$ to be chosen later. We claim that
	\begin{equation}
		\label{Eq:GrowthLemmaProofSeminormEstimate}
		\seminorm{(u-k)_-}^p_{W^{s,p}(B_2)} \leq C_1 k^p \quad \mbox{for every } k \ge \delta,
	\end{equation}
	for some constant~$C_1 \ge 1$ depending only on~$n$,~$s$,~$p$,~$H$, and~$\lambda$. To accomplish this, we apply the Caccioppoli inequality~\eqref{Eq:weakDeGclass} for the class~$\wDG_{-}^{s, p}$ with~$k\geq0$,~$x_0=0$,~$r = 2$, and~$R= 3$. We obtain that
	\begin{equation}
		\label{Eq:CacciopApplied}
		\begin{split}
			\seminorm{(u-k)_-}^p_{W^{s,p}(B_2)} & \leq C \, \bigg\{ d^p |B_3 \cap \{u<k\}|
			+ 
			\norm{(u-k)_-}^p_{L^p(B_3)} \\
			& \quad
			+ \norm{(u-k)_-}_{L^1(B_3)} \Tail_{s,p} \! \big( {(u-k)_-; B_2} \big)^{p-1} \bigg\},
		\end{split}
	\end{equation}
	where~$C$ denotes here and in the following computations a positive constant, depending only on~$n$,~$s$,~$p$,~$H$, and~$\lambda$, whose value may change from line to line. By recalling the non-negativity assumption~\eqref{Eq:u>0}, we get the following estimates.
	On the one hand, for every~$q\geq 1$ we have
	\begin{equation}
			\norm{(u-k)_-}^q_{L^q(B_3)}  = \int_{B_3\cap \{u<k\}} \big( {k-u(x)} \big)^q \, dx \leq k^q |B_3\cap \{u<k\}| \leq C k^q.
	\end{equation}
	On the other hand,
	\begin{equation}
		\begin{split}
			\Tail_{s,p} \!  \big( {(u-k)_-; B_2} \big)^{p-1}  & 
			= \int_{\R^n \setminus B_2} \dfrac{\big( {u(x)-k} \big)_-^{p-1}}{|x|^{n + sp}} \, dx 
			= \int_{\{ u < k \} \setminus B_2} \dfrac{\big( {k-u(x)} \big)^{p-1}}{|x|^{n + sp}} \, dx\\
			& \leq 2^{p-1} \left(
			\int_{\R^n \setminus B_2} \dfrac{k^{p-1}}{|x|^{n + sp}} \, dx 
			+ \int_{\{u < k\}\setminus B_2} \dfrac{|u(x)|^{p-1}}{|x|^{n + sp}} \, dx
			\right) \\
			& \leq C \left( k^{p-1}
			+ \int_{\{0 \leq u < k \} \setminus B_2} \dfrac{u(x)^{p-1}}{|x|^{n + sp}} \, dx
			+ \int_{\{u<0\} \setminus B_2} \! \dfrac{u_-(x)^{p-1}}{|x|^{n + sp}} \, dx
			\right) \\
			& \leq C \, \Big( {k^{p-1}
			+ \Tail_{s,p} \!  \big( {u_-; B_4} \big)^{p-1}} \Big).
		\end{split}
	\end{equation}
	By plugging these two estimates into~\eqref{Eq:CacciopApplied} (the first one with both~$q= p$ and~$q=1$), we arrive at
	\begin{equation}
		\seminorm{(u-k)_-}^p_{W^{s,p}(B_2)}  \leq C \, \Big( {d^p + k^p + k \, \Tail_{s,p} \!  \big( {u_-; B_4} \big)^{p-1}} \Big).
	\end{equation}
	By applying the smallness hypothesis~\eqref{Eq:SmallnessHypothesis} on~$d$ and on the tail of~$u_-$ outside~$B_4$, we are immediately led to claim~\eqref{Eq:GrowthLemmaProofSeminormEstimate}.
	
	At this point, we combine estimate~\eqref{Eq:GrowthLemmaProofSeminormEstimate} with the isoperimetric type inequality~\eqref{Eq:isopineinB2}.
	Write~$k_j \coloneqq 2^{-j}$ for any~$j \in \Z$. Let then~$M$ be the unique integer such that~$k_{M + 3} \leq \delta < k_{M + 2}$. 
	Recall that~$\delta$ is still to be chosen and observe that if~$\delta \in \left(0, \frac{1}{8} \right)$, then~$M\geq 1$.
	We plan to use the inequality~\eqref{Eq:isopineinB2} to estimate the measure of certain level sets of~$u$, in particular, those of the form~$B_2 \cap \{ k_{j + 1} < u < k_j \}$.
	To do it, we apply~\eqref{Eq:isopineinB2} with~$v = (u-k_{j-1})_-$,~$h = k_{j-1} - k_j = 2^{-j}$, and~$k = k_{j-1} - k_{j+1} = 3\cdot 2^{-j-1}$, for all~$j = 1, \ldots, M$.
	We get
	\begin{equation}
	\begin{split}
		& \left[ { \frac{\left| B_2 \cap \big\{ {(u-k_{j-1})_- \leq 2^{-j}} \big\} \right|}{|B_2|} \frac{\left| B_2 \cap \big\{ {(u-k_{j-1})_- \geq 3\cdot 2^{-j-1}} \big\} \right|}{|B_2|}} \right]^{\gamma} \\
		& \hspace{30pt} \leq C_\circ \dfrac{\seminorm{(u- k_{j-1})_-}^p_{W^{s,p}(B_2)}}{(2^{-j-1})^p} 
		\, \Psi_{s p} \! \left(\frac{\left| B_2 \cap \big\{ {2^{-j} < (u-k_{j-1})_- < 3\cdot 2^{-j-1}} \big\} \right|}{|B_2|}\right),
	\end{split}
	\end{equation}
	which is equivalent to
	\begin{equation}
		\begin{split}
			& \left[ { \frac{\left|B_2 \cap \big\{ {u \geq 2^{-j}} \big\} \right|}{|B_2|} \frac{\left|B_2 \cap \big\{ {u \leq 2^{-j-1}} \big\} \right|}{|B_2|}} \right]^\gamma
			\\
			& \hspace{60pt} \leq C_2 \dfrac{\seminorm{(u- k_{j-1})_-}^p_{W^{s,p}(B_2)}}{
				k_{j-1}^p} \,
			\Psi_{s p} \! \left(\dfrac{\left|B_2 \cap \big\{ {2^{-j-1} < u < 2^{-j}} \big\} \right|}{|B_2|}\right),
		\end{split}
	\end{equation}
	with~$C_2 \coloneqq 4^p C_\circ$. Since~$k_{j-1} > \delta$ for every~$j = 1, \ldots, M$, we can apply estimate~\eqref{Eq:GrowthLemmaProofSeminormEstimate} with~$k=k_{j-1}$. This readily gives
	\begin{equation}
		\dfrac{\seminorm{(u- k_{j-1})_-}^p_{W^{s,p}(B_2)}}{
			k_{j-1}^p} \leq C_1 \quad \text{for every } j = 1, \ldots, M.
	\end{equation}
	As a consequence, setting~$C_3 \coloneqq C_1 C_2$ we obtain
	\begin{equation} \label{Eq:uleveljtechest}
		\frac{1}{C_3} \left[ {\frac{\left|B_2 \cap \big\{ {u \geq 2^{-j}} \big\} \right|}{|B_2|} \frac{\left|B_2 \cap \big\{ {u \leq 2^{-j-1}} \big\} \right|}{|B_2|}} \right]^\gamma
		\leq \Psi_{s p} \! \left(\dfrac{\left|B_2 \cap \big\{ {2^{-j-1} < u < 2^{-j}} \big\} \right|}{|B_2|}\right).
	\end{equation}
	Note now that the ``large density'' hypothesis~\eqref{Eq:Densityu>1} yields that
	\begin{equation}
		\dfrac{|B_2 \cap \{u \geq 2^{-j}\}|}{|B_2|} \geq \dfrac{|B_2 \cap \{u \geq 1\}|}{|B_2|} \geq \dfrac{1}{2}.
	\end{equation}
	On the other hand, since~$j \le M$ we have that~$2^{-j-1} \ge 2^{-M- 1} = 2 k_{M + 2} > 2 \delta$ and therefore
	\begin{equation}
		\dfrac{|B_2 \cap \{u \le 2^{-j - 1}\}|}{|B_2|} \ge \dfrac{|B_2 \cap \{u \le 2 \delta \}|}{|B_2|}.
	\end{equation}
	By combining the last two inequalities with~\eqref{Eq:uleveljtechest}, we find that
	\begin{equation}
		\frac{1}{C_4} \, |B_2 \cap \{u \le 2 \delta \}|^\gamma
		\leq \Psi_{s p} \! \left(\dfrac{\left|B_2 \cap \big\{ {2^{-j-1} < u < 2^{-j}} \big\} \right|}{|B_2|}\right) \quad \mbox{for every } j = 1, \ldots, M,
	\end{equation}
	where~$C_4 \coloneqq \big( {2 |B_2|} \big)^\gamma C_3$. Recall that~$\Psi_{s p}$ is strictly increasing and thus invertible. By applying its inverse~$\Psi_{s p}^{-1}$ to both sides of the above inequality and adding up the resulting estimates over~$j$, we obtain
	\begin{equation}
		M \Psi_{s p}^{-1} \! \left( \frac{1}{C_4} \, |B_2 \cap \{u \le 2 \delta \}|^\gamma \right) \le \sum_{j = 1}^{M} \dfrac{\left|B_2 \cap \big\{ {2^{-j-1} < u < 2^{-j}} \big\} \right|}{|B_2|} \le \dfrac{\left|B_2 \cap \{ u < 1 \} \right|}{|B_2|} \le 1.
	\end{equation}
	By this and the fact that~$M \ge - \log_2 \delta - 3 \ge \frac{1}{2} |{\log \delta}|$, provided~$\delta \in \left( 0, \frac{1}{64} \right]$, we conclude~that
	\begin{equation}
		|B_2 \cap \{u \le 2 \delta \}|^\gamma \le C_4 \, \Psi_{s p} \! \left( \frac{1}{M} \right) \le C_4 \, \Psi_{s p} \! \left( \frac{2}{|{\log \delta}|} \right).
	\end{equation}
	Clearly, this estimate leads to claim~\eqref{Eq:Densityu<2delta}, by taking~$\delta$ sufficiently small, in dependence of~$n$,~$s$,~$p$,~$H$,~$\lambda$, and~$\tau$. The proof is thus complete.
\end{proof}

%%%%%%%%%%%%%%%%%%%%%%%%%%%%%%%%%%%%%%%%%%%%%%%%%%%%%%%%%%%%
%%%%%%%%%%%%%%%%%%%%%%%%%%%%%%%%%%%%%%%%%%%%%%%%%%%%%%%%%%%%

\end{document}